\documentclass[12pt,a4paper]{amsart}
\usepackage[latin1]{inputenc}
\usepackage[english]{babel}
\usepackage{amsmath,amssymb,amsthm}

\usepackage{cite}

 \setlength{\parskip}{1ex}
 \textwidth 15cm
 \hoffset=-1cm

\theoremstyle{plain}
\newtheorem{theorem}{Theorem}[section]

\newtheorem{corollary}[theorem]{Corollary}

\theoremstyle{definition}

\newtheorem{remark}[theorem]{Remark}

\begin{document}

\title[Klurman] {V. Markov's problem for monotone polynomials}

\keywords{Markov inequality; Bernstein inequality; Jacobi polynomial.}

\subjclass[2000]{ 41A17. }

\author[Klurman]{Oleksiy Klurman}

\address{Department of Mathematics,
University of Manitoba, R3T2N2~Winnipeg,
Canada} \email{\texttt{lklurman@gmail.com}}

\date{March 6, 2012.}

\begin{abstract}
We consider the classical  problem of estimating norm of the derivative of algebraic polynomial via the norm of polynomial itself. The corresponding extremal problem for general polynomials in uniform norm was solved by V. Markov. In this note we solve analogous problem for monotone polynomials. As a consequence, we find exact constant in Bernstein inequality for monotone polynomials. \end{abstract}
\maketitle

\begin{section}{Introduction}

We consider the following extremal problem:

\begin{quote}
 {\it For a given norm $\|\cdot\|$, determine the best constant $A_n$
such that the inequality $$\|P_n'\|\le A_n\|P_n\|$$ holds for all
$P_n\in\mathbb{P}_n$, i.e.,
$$A_n=\sup_{P_n\in\mathbb{P}_n}\frac{\|P'_n\|}{\|P_n\|}.$$}
\end{quote}

The first result in this area appeared in $1889$. It is the
well- known A. Markov's inequality, namely:
\begin{theorem}\label{amarkov}(A. Markov). For every polynomial $P_n\in \mathbb{P}_n,$ the
following inequality holds:
\begin{equation}
\label{eq:amarkov}
\|P'_n\|\le n^2\|P_n\|.
\end{equation}
The equality holds if and only if $P_n=cT_n,$ where $T_n$ is the Chebyshev polynomial of the first kind, that is $T_n (x)=\cos(n\arccos x)$ for $x\in[-1,1].$
 \end{theorem}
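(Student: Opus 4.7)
My plan follows the classical two-step strategy: control $|P_n'(x)|$ on compact subsets of $(-1,1)$ by Bernstein's inequality, and treat a neighbourhood of the endpoints $\pm 1$ by a dedicated argument. For the interior, I would establish Bernstein's pointwise estimate
\begin{equation*}
|P_n'(x)|\sqrt{1-x^2}\le n\,\|P_n\|,\qquad x\in(-1,1),
\end{equation*}
via the substitution $x=\cos\theta$: setting $Q(\theta):=P_n(\cos\theta)$ produces an even trigonometric polynomial of degree $n$ with $Q'(\theta)=-\sin\theta\,P_n'(\cos\theta)$, so the claim reduces to the trigonometric Bernstein inequality $\|Q'\|_\infty\le n\|Q\|_\infty$ (itself proved by a short interpolation or complex-analytic argument). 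This already delivers $|P_n'(x)|\le n^2\|P_n\|$ whenever $1-x^2\ge 1/n^2$.

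For the remaining endpoint regime I would exploit the Lagrange interpolation identity at the Chebyshev--Lobatto nodes $x_k=\cos(k\pi/n)$, $k=0,\ldots,n$: for every $P_n\in\mathbb{P}_n$,
\begin{equation*}
P_n'(y)=\sum_{k=0}^n\ell_k'(y)\,P_n(x_k),
\end{equation*}
where $\ell_k$ is the Lagrange basis. The key step is to verify the sign alternation $\mathrm{sgn}(\ell_k'(1))=(-1)^k$, which follows from a direct inspection of the explicit product formula for $\ell_k$ together with the fact that the nodes are strictly decreasing from $x_0=1$ to $x_n=-1$. Testing the identity on $P_n=T_n$, for which $T_n(x_k)=(-1)^k$ and $T_n'(1)=n^2$, then gives
\begin{equation*}
\sum_{k=0}^n|\ell_k'(1)|=\sum_{k=0}^n(-1)^k\ell_k'(1)=n^2,
\end{equation*}
so that $|P_n'(1)|\le n^2\|P_n\|$; the estimate at $-1$ follows by the substitution $x\mapsto-x$.

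To bridge the two regimes, I would argue by contradiction: were $\|P_n'\|>n^2\|P_n\|$ attained at some $y_0\in[-1,1]$, the previous two steps would confine $y_0$ to the thin layer $\sqrt{1-1/n^2}<|y_0|<1$. A Chebyshev-alternation argument applied to the auxiliary polynomial $\|P_n'\|T_n-n^2P_n$, using its alternating sign pattern at the $n+1$ nodes $x_k$ together with the extremal condition $P_n''(y_0)=0$ at an interior maximum of $|P_n'|$, forces more zeros in $[-1,1]$ (counted with multiplicity) than its degree $n$ permits, a contradiction. Tracing the equality case through the chain of inequalities forces $P_n(x_k)=\varepsilon(-1)^k\|P_n\|$ for all $k$ with $\varepsilon\in\{\pm 1\}$, and uniqueness of Lagrange interpolation at $n+1$ nodes identifies $P_n$ with $\varepsilon\|P_n\|T_n$.

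The main technical obstacle is the sign alternation $\mathrm{sgn}(\ell_k'(1))=(-1)^k$: this is the step in which the extremality of the Chebyshev polynomial enters the argument decisively, for without it the Lagrange estimate $|P_n'(1)|\le\sum_k|\ell_k'(1)|\cdot\|P_n\|$ is far too loose to recover the sharp constant $n^2$.
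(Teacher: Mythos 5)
A preliminary remark: the paper does not prove Theorem~\ref{amarkov} at all; A.~Markov's inequality is quoted in the introduction as classical background, so there is no proof in the paper to compare against and I can only judge your argument on its own terms.

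Your first two steps are sound. The Bernstein reduction gives $|P_n'(x)|\le n^2\|P_n\|$ whenever $1-x^2\ge 1/n^2$, and the sign alternation $\mathrm{sgn}(\ell_k'(1))=(-1)^k$ at the Chebyshev--Lobatto nodes is correct (for $k\ge1$ the only surviving term of $\ell_k'(1)=c_k\sum_{i\ne k}\prod_{j\ne k,i}(1-x_j)$ is the one with $i=0$, and $c_k=\prod_{j\ne k}(x_k-x_j)^{-1}$ has sign $(-1)^k$), so $\sum_k|\ell_k'(1)|=T_n'(1)=n^2$ and the bound at $\pm1$ follows. The genuine gap is the bridge. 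Put $M=\|P_n\|$, $M'=\|P_n'\|$ and suppose $M'>n^2M$ is attained at an interior $y_0$ with $\sqrt{1-1/n^2}<y_0<1$, $P_n'(y_0)=M'$, $P_n''(y_0)=0$. Your auxiliary polynomial $Q=M'T_n-n^2P_n$ does alternate in sign at the $n+1$ Lobatto nodes, hence has exactly $n$ simple zeros $z_n<\dots<z_1$, one in each gap $(x_{k+1},x_k)$ --- but this exactly saturates its degree, so no contradiction comes from $Q$ itself. The data at $y_0$ concern $Q'$: one gets $Q'(y_0)=M'(T_n'(y_0)-n^2)<0$, $Q''(y_0)=M'T_n''(y_0)>0$ and $Q'(1)=n^2(M'-P_n'(1))\ge0$. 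By Rolle, $Q'$ (degree $n-1$) already carries its full complement of $n-1$ zeros, one in each $(z_{j+1},z_j)$; the sign change of $Q'$ on $(y_0,1)$ yields an \emph{extra} zero only if $y_0\ge z_1$. Nothing in your hypotheses prevents $z_1$ (the zero of $Q$ in $(x_1,1)$) from lying above $y_0$, in which case $Q'$ is legitimately negative on a subinterval of $(z_2,z_1)$ containing $y_0$ and the zero count never exceeds the degree. So the contradiction you announce does not materialize. Note also that the sign pattern of the $\ell_k'$ cannot be propagated from $y=1$ into the layer, since each $\ell_k'$ with $k\ge2$ has a zero inside $(x_1,1)$.

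The standard repair is to interpolate the degree-$(n-1)$ polynomial $P_n'$ at the $n$ zeros $\eta_j=\cos\bigl((2j-1)\pi/(2n)\bigr)$ of $T_n$ rather than $P_n$ at the $n+1$ Lobatto nodes (this is Schur's inequality). For $y\ge\eta_1=\cos(\pi/(2n))$ every term $T_n(y)/\bigl(T_n'(\eta_j)(y-\eta_j)\bigr)$ is nonnegative, and since Bernstein gives $|P_n'(\eta_j)|\le n\|P_n\|/\sqrt{1-\eta_j^2}$ while $|T_n'(\eta_j)|=n/\sqrt{1-\eta_j^2}$, one obtains $|P_n'(y)|\le\|P_n\|\,T_n(y)\sum_j(y-\eta_j)^{-1}=\|P_n\|\,T_n'(y)\le n^2\|P_n\|$ on the whole zone $[\eta_1,1]$, which overlaps the Bernstein zone because $\sin(\pi/(2n))\ge1/n$. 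With that in place your equality analysis at $y=\pm1$ via the Lagrange identity and uniqueness of interpolation goes through as you describe.
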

 
 By $\triangle_n$ we denote the set of all monotone polynomials of
degree $n$ on $[-1,1].$ 
In $1926,$ S. Bernstein~\cite{MR1512353} pointed out that Markov's inequality for
monotone polynomials is not essentially better than for all
polynomials, in the sense, that the order of $\sup_{P_n\in\triangle_
n}\|P'_n\|/\|P_n\|$ is $n^2$. He proved his result only for
odd $n$. In $2001,$ Qazi \cite{MR1835375} extended Bernstein's idea to include polynomials of even degree. Next theorem contains their results:

 \begin{theorem}\label{bernquaz}(Bernstein \cite{MR1512353}, Qazi \cite{MR1835375}).
\[
\sup_{P_n\in\triangle_ n}\frac{\|P'_n\|}{\|P_n\|}=\left\{
\begin{array}{ll}
 \frac{(n+1)^2}{4} , & \mbox{\rm if } n=2k+1 ,\\
 \frac{n(n+2)}{4} , & \mbox{\rm if }  n=2k.

\end{array}
\right.
\]
\end{theorem}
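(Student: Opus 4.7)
The strategy is to reduce the extremal problem to estimating $\|Q\|_\infty/\int_{-1}^1 Q$ for a non-negative polynomial $Q$ of degree $n-1$ on $[-1,1]$, and then apply a positive Gauss-type quadrature rule of the correct degree of exactness, handling the odd and even cases separately.

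\textbf{Reduction.} Let $P_n\in\triangle_n$ be monotone, which we may take to be non-decreasing. Then $Q := P_n'$ is a non-negative polynomial of degree $n-1$ on $[-1,1]$. Since adding a constant to $P_n$ affects neither $P_n'$ nor the containment $P_n\in\triangle_n$, we may assume $P_n(1)=-P_n(-1)$; monotonicity then gives $\|P_n\|_\infty=P_n(1)=\tfrac12\int_{-1}^1 Q$. Hence the theorem reduces to computing
\[
2\sup_{\substack{Q\ge 0\text{ on }[-1,1]\\ \deg Q\le n-1}}\frac{\|Q\|_\infty}{\int_{-1}^1 Q(t)\,dt}.
\]

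\textbf{Upper bound via quadrature.} Let $x_\star\in[-1,1]$ be a point where $Q$ attains its supremum. I would apply a positive quadrature formula $\int_{-1}^1 f\,dt=\sum_j w_j f(x_j)$ that is exact on $\mathbb{P}_{n-1}$ and has $x_\star$ as one of its nodes; since every $Q(x_j)\ge 0$, this yields $\int Q\ge w_\star Q(x_\star)=w_\star\|Q\|_\infty$, so $\|Q\|_\infty/\int Q\le 1/w_\star$. For $x_\star$ at an endpoint, the natural choices are Gauss--Radau--Legendre when $n=2m+1$ is odd (with $m+1$ nodes, one fixed endpoint, and endpoint weight $2/(m+1)^2$) and Gauss--Lobatto--Legendre when $n=2k$ is even (with $k+1$ nodes, both endpoints fixed, and endpoint weight $2/(k(k+1))$). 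Substituting into the reduced ratio yields the upper bounds $(n+1)^2/4$ and $n(n+2)/4$ claimed in the theorem.

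\textbf{Interior maxima, sharpness, and main obstacle.} If $x_\star$ lies strictly inside $(-1,1)$, I would invoke the Markov--Lukács representation---$Q=r^2+(1-x^2)s^2$ with $\deg r\le m$, $\deg s\le m-1$ in the odd case, and $Q=(1+x)r^2+(1-x)s^2$ with $\deg r,s\le k-1$ in the even case---and bound each summand by the reproducing kernel of the associated Jacobi measure. This reduces the remaining task to showing that $K_m(x,x)$ (respectively $(1\pm x)K^{(0,1)}_{k-1}(x,x)$) attains its maximum on $[-1,1]$ at $x=\pm 1$, a classical statement about Legendre and Jacobi Christoffel functions which I expect to be the main technical obstacle. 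Sharpness is then achieved by taking $P_n$ to be an antiderivative of the square of the reproducing kernel at $x=1$ (of the Legendre or Jacobi weight $(1+x)\,dx$ according to parity), shifted so that $P_n(1)=-P_n(-1)$; equality in the reproducing-kernel inequality forces equality throughout and exhibits the extremizer explicitly.
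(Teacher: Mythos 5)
Your reduction and your endpoint argument are correct and take a genuinely different route from the paper. The paper first solves the full pointwise problem $\sup\{P(x_0)/\int_{-1}^1 P\,dx : P\ge 0 \text{ on } [-1,1]\}$ by a variational argument (perturbing an extremal polynomial, showing all its zeros lie in $[-1,1]$, identifying it through orthogonality relations and Christoffel--Darboux), and only afterwards maximizes over $x_0$ via Szeg{\H{o}}'s theorem that $w(x)p_l(x)^2$ peaks at an endpoint when $w$ is monotone. Your observation that, when $Q=P_n'$ attains its maximum at $\pm1$, a single application of the positive Gauss--Radau rule (for $n=2m+1$: $m+1$ nodes, exact on $\mathbb{P}_{2m}$, endpoint weight $2/(m+1)^2$) or Gauss--Lobatto rule (for $n=2k$: $k+1$ nodes, exact on $\mathbb{P}_{2k-1}$, endpoint weight $2/(k(k+1))$) already yields $(n+1)^2/4$ and $n(n+2)/4$ is a clean shortcut: the degrees of exactness match $\deg Q=n-1$ exactly, and your normalization $P_n(1)=-P_n(-1)$ correctly turns $\int_{-1}^1 Q\le 2\|P_n\|$ into an equality, which together with the extremal $Q=(1+x)\bigl(K^{(0,1)}_{k-1}(x,1)\bigr)^2$ (keep the weight factor $(1+x)$ in front of the squared kernel, otherwise $\int Q\ne K(1,1)$) settles sharpness. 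This is more elementary and self-contained than the paper's derivation for the endpoint configuration.

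The gap is in the interior-maximum case, which cannot be waved away: a nonnegative $Q$ can certainly peak inside $(-1,1)$, and there your argument collapses back onto exactly the machinery the paper uses. Luk\'acs plus Cauchy--Schwarz gives, in the odd case, $Q(x_0)\le\max\bigl(K_m(x_0,x_0),\,(1-x_0^2)K^{(1,1)}_{m-1}(x_0,x_0)\bigr)\int_{-1}^1 Q$, and you must bound \emph{both} terms by $K_m(1,1)=(m+1)^2/2$. For the Legendre term, and for the terms $(1\pm x)K^{(0,1)}_{k-1}(x,x)$ arising in the even case, Szeg{\H{o}}'s monotone-weight theorem does the job, and since the paper also cites it without proof, relying on it is fair. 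But your sketch omits the term $(1-x_0^2)K^{(1,1)}_{m-1}(x_0,x_0)$ entirely, and for it the weight $1-x^2$ is \emph{not} monotone on $[-1,1]$, so Szeg{\H{o}}'s theorem does not apply as stated; a separate estimate (for instance a termwise comparison of $(1-x^2)\bigl(J^{(1,1)}_l(x)\bigr)^2$ with the corresponding Legendre quantities) is required. This is a real loose end rather than a routine citation --- note that the paper itself only writes out the even-degree case of this theorem, where precisely this difficulty does not arise.
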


V. Markov  investigated a more general problem:
\begin{quote} if $k_0,
k_1,..., k_n$ are given constants and $P_n(x)=\sum\limits_{i=0}^n
a_i x^i$ satisfies $\|P_n\|=1,$  what is the precise bound for the
linear form $\sum\limits_{i=0}^n a_ik_i$ ?\end{quote} By suitably choosing the constants $k_i$ the linear
form can be made equal to any derivative of $P_n(x)$ at any
preassigned point.  

{\bf V. Markov's problem.}
\begin{quote}
 {\it Let $x_0\in [-1,1]$ be a fixed point. For $0\le k\le n,$
 find the maximum value of $|P^{(k)}(x_0)|$ over all $P_n\in\mathbb{P}_n$ such that $\|P_n\|=1.$}
\end{quote}
 The problem was studied more completely and in considerably shorter
 way by Gusev ~\cite{MR0197647} with the help of a method developed by
 Voronovskaja, who solved this problem for the case $k=1$, (see ~\cite{MR0267057}).

In this note, we give a solution of an analogous problem for the case of
monotone polynomials and  $k=1$, namely the following
problem is considered:

{\bf Problem.}
\begin{quote}
{\it Let $x_0\in [-1,1]$ be a fixed point.
 Find the maximum value of $|P'(x_0)|$ over all monotone polynomials $P_n\in\mathbb{P}_n$ such that $\|P_n\|=1.$}
\end{quote}

As a consequence, we obtain a simple proof of the main result from  \cite{MR1835375} as well as sharp Bernstein's inequality for monotone polynomials.
\end{section}
\begin{section}{Proof of Main Result}

 In order to formulate the main result the following three types of polynomials are needed :
   \begin{align*}
 &S_k (x):=(1+x)\sum\limits_{l=0}^{k} (J^{(0,1)}_l (x))^2;\\
&H_k (x):=(1-x^2)\sum\limits_{l=0}^{k-1} (J_l ^{(1,1)} (x))^2;\\
&F_k(x):=\sum\limits_{l=0}^{k} (J_l ^{(0,0)} (x))^2.\\
\end{align*}

\begin{theorem}\label{oklurman}
Let $x_0$ be fixed point in the interval $[-1,1].$ Then, for every
$P_n\in\triangle_n,$ $n\ge 1,$ the following sharp inequality holds:
$$|P'_n(x_0)|\le 2\max(S_k (x_0),S_k (-x_0))\|P_n\|,$$ for $n=2k+2,$ $k\ge 0$, and
$$|P'_n (x_0)|\le 2\max(F_k(x_0),H_k(x_0))\|P_n\|,$$ for $n=2k+1,$ $k\ge 0.$
\end{theorem}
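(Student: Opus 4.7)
The plan is to reduce the estimate to a reproducing-kernel Cauchy--Schwarz argument, exploiting that $P'_n$ is a nonnegative polynomial of degree $n-1$ on $[-1,1]$. Without loss of generality assume $P_n$ is nondecreasing, so $P'_n\ge 0$ on $[-1,1]$, and invoke the classical Markov--Lukács representation according to the parity of $n-1$. For odd $n=2k+1$, the degree $n-1=2k$ is even, yielding
$$P'_n(x)=A(x)^2+(1-x^2)B(x)^2,\qquad \deg A\le k,\ \deg B\le k-1.$$
For even $n=2k+2$, the degree $n-1=2k+1$ is odd, giving
$$P'_n(x)=(1+x)A(x)^2+(1-x)B(x)^2,\qquad \deg A,\deg B\le k.$$

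Next, I would control $A(x_0)^2$ and $B(x_0)^2$ by the Christoffel functions of the orthonormal Jacobi systems whose weight $(1-x)^\alpha(1+x)^\beta$ matches the factor in front of the corresponding square. In the odd case this means Legendre $(\alpha,\beta)=(0,0)$ for $A$ and Jacobi $(1,1)$ for $B$:
$$A(x_0)^2\le F_k(x_0)\int_{-1}^{1}A(t)^2\,dt,\qquad (1-x_0^2)B(x_0)^2\le H_k(x_0)\int_{-1}^{1}(1-t^2)B(t)^2\,dt.$$
Summing these and pulling out $\max(F_k(x_0),H_k(x_0))$, the bracket becomes $\int_{-1}^{1}P'_n(t)\,dt = P_n(1)-P_n(-1)$, which is at most $2\|P_n\|$. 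In the even case, one uses Jacobi $(0,1)$ for $A$ and Jacobi $(1,0)$ for $B$; the parity identity $(J_l^{(1,0)}(x))^2=(J_l^{(0,1)}(-x))^2$ rewrites $(1-x_0)\sum_{l=0}^{k}(J_l^{(1,0)}(x_0))^2$ as $S_k(-x_0)$, and the same integration step again yields the factor $2\|P_n\|$.

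Sharpness then follows by reversing the Cauchy--Schwarz steps. Depending on which of the two quantities inside the maximum is larger, take the corresponding $A$ or $B$ to be (a multiple of) the reproducing kernel $K(\,\cdot\,,x_0)$ of the appropriate Jacobi system and set the other to zero, assemble $P'_n$ from the Markov--Lukács template, integrate, and choose the constant of integration so that $P_n(1)=-P_n(-1)=\|P_n\|$. The resulting polynomial is monotone by construction and saturates every inequality along the way. The main technical hurdle is purely organisational: keeping the Markov--Lukács weights aligned with the correct Jacobi parameters so that the Christoffel functions on the diagonal are exactly the $S_k$, $F_k$, $H_k$ of the statement; once this bookkeeping is settled, both the inequality and its extremal cases come out without further computation.
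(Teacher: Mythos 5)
Your argument is correct, and it reaches the theorem by a genuinely different and in fact more economical route than the paper. The paper sets up the auxiliary extremal problem $\max P(x_0)/\int_{-1}^1 P$ over nonnegative polynomials $P$ of degree at most $n-1$, and solves it variationally: it first proves (by compactness and perturbation arguments) that an extremizer exists, has full degree, and has all its zeros in $[-1,1]$, so that it can be written as $w(x)g(x)^2$ with $w\in\{1,1\pm x,1-x^2\}$; a second perturbation then forces $g$ to be the Christoffel--Darboux kernel at $x_0$, after which the extremal value $w(x_0)\sum_{l}p_l(x_0)^2$ is computed via the Christoffel--Darboux formula. You bypass the entire existence-and-structure analysis by applying the Markov--Luk\'acs representation to an \emph{arbitrary} nonnegative $P'_n$ and estimating each of the two resulting terms $w_i(x_0)A_i(x_0)^2$ by the reproducing-kernel Cauchy--Schwarz inequality $A(x_0)^2\le\bigl(\sum_l p_l(x_0)^2\bigr)\int w A^2$; since $\int w_1A^2+\int w_2B^2=\int P'_n=P_n(1)-P_n(-1)\le 2\|P_n\|$, pulling out the larger of the two Christoffel-function values gives exactly the stated bound, with the degree bookkeeping ($\deg A\le k$, $\deg B\le k-1$ in the even-degree-derivative case) matching the summation limits in $F_k$, $H_k$, $S_k$. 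Your sharpness construction (one component equal to the kernel, the other zero, antiderivative normalized so $P_n(1)=-P_n(-1)$) produces the same extremal polynomials the paper identifies. What the paper's longer route buys is a characterization of the extremizers as a byproduct of the variational argument; what your route buys is brevity and the elimination of the compactness and zero-location lemmas, at the cost of invoking the Markov--Luk\'acs theorem as a black box (which the paper essentially re-derives for its extremizer).
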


\begin{proof}
 We start with the solution of the following problem.
 Fix $x_0\in [-1,1],$ find the maximum value of $$S(P,x_0):=\frac
{P(x_0)}{\int\limits_{-1}^{1}P(x)dx},$$ over $P\in\mathbb{P}^+_n$,
where $\mathbb{P}^+_n$ denotes the set of all nonnegative on $[-1,1]$ polynomials of degree at most $n.$
 In what follows, we assume that $x_0\in (-1,1).$ All the results can be extended to $x_0=1$ and $x_0=-1$ by continuity.

 Note, that this maximum value is attained because of the
sequentially compactness of our set $\mathbb{P}^+_n$.

 Let us denote
by $P^*(x)$ an extremal polynomial from ${P}_{n,1}^+$ with the largest degree and the maximal number of zeros inside the interval $[-1,1].$ In other words, if $$Q(x)=argmax_{P\in{P}_{n,1}^+}S(P,x_0),$$ then $\deg P^*\ge\deg Q,$ and the number of zeros of $Q$ inside $[-1,1]$  $\le$ than the number of zeros of $P^*.$

We first prove $\deg{(P^*)} =n.$ Indeed, if $\deg{(P^*)}\le n-1$ consider two polynomials:

$$P_1 (x)=(1-x)P^*(x),$$ $$P_2 (x)=(1+x)P^*(x).$$ None of them can be extremal, hence,
$$\frac
{P^*(x_0)}{\int\limits_{-1}^{1}P^*(x)dx}>\frac{(1-x_0)P^*(x_0)}{\int\limits_{-1}^{1}(1-x)P^*(x)dx},$$
and $$\frac
{P^*(x_0)}{\int\limits_{-1}^{1}P^*(x)dx}>\frac{(1+x_0)P^*(x_0)}{\int\limits_{-1}^{1}(1+x)P^*(x)dx}.$$
Multiplying both inequalities by common denominators and adding the results up we get $$\int\limits_{-1}^{1}P^*(x)dx>\int\limits_{-1}^{1}P^*(x)dx,$$ that provides a contradiction, and so $\deg (P^*)=n.$ The next step is to show, that all zeros
of $P^*(x)$ lie in the interval $[-1,1].$ Suppose that this
is not the case and
 write $P^*(x)=P_1(x)P_2(x),$
where all zeros of $P_1$ lie in $[-1,1]$ and $P_2(x)>\delta
>0,$ for all $x\in [-1,1],$ and $\deg{(P_2)}\ge 1$.
Note, that for every fixed polynomial $h$, $\deg{(h)}\le \deg{(P_2)}$
and sufficiently small $t$ all polynomials of the form
$Q(x)=P^*(x)+th(x)P_1(x)$ belong to $\mathbb{P}^+_n$. Hence, $t=0$
should be a point of local minimum of the function
$$g(t)=\frac
{P^*(x_0)+th(x_0)P_1(x_0)}{\int\limits_{-1}^{1}(P^*(x)+th(x)P_1
(x))dx}.$$ This implies that $g'(0)=0,$ where
$$g'(0)=\frac{P_1(x_0)h(x_0)\int\limits_{-1}^{1}P^*(x)dx-P^*(x_0)\int\limits_{-1}^{1}P_1(x)h(x)dx}{\left(\int\limits_{-1}^{1}P^*(x)dx\right)^2},$$
and so $$\int\limits_{-1}^{1}P_1(x)(P_2
(x)h(x_0)-P_2(x_0)h(x))dx=0,$$ for all polynomials $h$ with $\deg
{(h)}\le \deg{(P_2)}.$ Observe, that this equality implies that if $l(x)$ is such that

$$l(x)(x-x_0)=P_2 (x)h(x_0)-P_2(x_0)h(x),$$ then if $h(x)$ runs over
all polynomials of degree $\le \deg{(P_2)},$ then $l(x)$ runs over
all polynomials with  $\deg{(l)}\le \deg{(P_2)} -1.$ Therefore,
\begin{equation}\label{linear} \int\limits_{-1}^{1}P_1(x)(x-x_0)l(x)dx=0
\end{equation} holds for all polynomials $l(x)$ of degree $\le \deg{(P_2)} -1.$

 If $\deg{(P_2)}\ge 2$ take $l(x)=x-x_0$ to get a contradiction
 (integral of a nonnegative non-zero function cannot be equal to $0$).
 Now, suppose that $\deg {(P_2)}=1.$ Then
 $$\int\limits_{-1}^{1}P_1(x)(x-x_0)dx=0$$ and one can write
 $P^*(x)=(a-x)P_1 (x)$ where $a>1$ or $P^*(x)=(b+x)P_1 (x)$ for some
 $b>1$. In both of these cases it is easy to see that $S(P^*,x_0)=S
 (P_1,x_0)$. Indeed, in the first case
\begin{align*}
 S(P^*,x_0)&= \frac{(a-x_0)P_1(x_0)}{\int\limits_{-1}^{1}(a-x)P_1(x)dx}\\&=\frac{(a-x_0)P_1(x_0)}{\int\limits_{-1}^{1}(x_0-x)P_1(x)dx+(a-x_0)\int_{-1}^1P_1(x)dx}=S(P_1,x_0).
  \end{align*}
 In the second case, it can be done in the same way. But then, taking $$P_3 (x)=(1+x)P_1(x)$$ and $$P_4
 (x)=(1-x)P_1(x)$$ and repeating all arguments from the
 beginning of the proof one get that either $S(P_3,x_0)$ or $S(P_4,x_0)$ is
 not less then $S(P,x_0)=S(P_1,x_0)$ and all zeros of $P_3$ and $P_4$ lie in
 the segment $[-1,1],$ that contradicts our assumption. Hence, all zeros of $P^*(x)$ lie in the interval $[-1,1].$

 We distinguish two cases depending on parity of $n.$

 If $n=2k+1,$ $k\ge 0$ an extremal polynomial can be expressed in one of the following ways: $P^*(x)=(1+x)g^2
 (x)$ or $P^*(x)=(1-x)g^2_1(x)$. If $n=2k,$ then an extremal polynomial can be expressed as $P^*(x)=(1-x^2)g^2(x)$ or $P^*(x)=g^2(x).$ In general, we can write an extremal polynomial as $P^*(x)=w(x)g^2(x),$ where $w(x)$ is one of the function $1-x, 1+x, 1-x^2, 1.$

  For any fixed polynomial
$h(x)$ with $\deg {(h)}\le \deg {(g)}$ consider the function
$$\psi (t)=\frac {w(x_0)(g(x_0)+th(x_0))^2}{\int\limits_{-1}^{1} w(x)(g(x)+th(x))^2dx}.$$
Since $P^*$ is extremal, this function has a local maximum at $t=0,$ and so $\psi'
(0)=0,$ i.e.,
\begin{equation}
\label{ex:klurman}
\psi'(0)=2w(x_0)\cdot\frac{g(x_0)h(x_0)\int\limits_{-1}^{1}w(x) g^2 (x)dx- g^2(x_0)\int\limits_{-1}^{1}w(x)g(x)h(x)dx}{\left(\int\limits_{-1}^{1}w(x)g^2(x)dx\right)^2}=0.
\end{equation}
Since $g(x_0)\ne 0$ (otherwise, $\psi(0)=0,$ which contradicts to maximality of $P^*$) last equality implies
$$h(x_0)\int\limits_{-1}^{1}w(x) g^2 (x)dx-
g(x_0)\int\limits_{-1}^{1}w(x)g(x)h(x)dx=0$$
or
\begin{equation}
\label{der:klurman}
\int\limits_{-1}^{1}w(x)g(x)(h(x_0)g(x)-h(x)g(x_0))dx=0
 \end{equation}for all polynomials  $h(x)\in\mathbb{P}_k$ if $w(x)=1,1-x,1+x$ and for all polynomials $h(x)\in\mathbb{P}_{k-1},$ if $w(x)=1-x^2$.
 We first consider the case $w(x)=1,1-x,1+x.$
 Repeating the same
 argument as we used to prove~\eqref{linear} we can deduce that~\eqref{der:klurman} implies that for all $l\in\mathbb{P}_{k-1}$ we have $$\int\limits_{-1}^{1}w(x)g(x)(x-x_0)l(x)dx=0.$$
 Denote $$G(x)=(x-x_0)g(x)$$ and consider the sequence of
 polynomials $p_k$ orthonormal on $[-1,1]$ with
 respect to the weight $w(x).$ Since $\deg {(G)}=k+1$ and orthonormal polynomials of degree $\le k+1$ form a basis (over $\mathbb{R}$) of $\mathbb{P}_{k+1},$ one can write $$G(x)=\sum\limits_{m=0} ^{k+1} c_m p_m
 (x)$$ for some real constants $c_m.$ Taking
 $l(x)=p_i(x)$ for $0\le i\le k-1$ we obtain that $c_i=0$ for $0\le i \le
 k-1$. Indeed, if $l(x)=p_i(x),$ $0\le i\le k-1$ then $$\int\limits_{-1}^{1}w(x)G(x)J_i(x)dx=0=\sum_{m=0}^{k+1}c_m\int\limits_{-1}^{1}w(x)p_k(x)p_i(x)dx=c_i.$$ Thus, $$G(x)=(x-x_0)g(x)=c_{k+1} p_{k+1} (x) +c_k p_k (x).$$
 Letting $x=x_0,$ we get $c_{k+1} p_{k+1} (x_0) +c_k p_k (x_0)=0,$ and so $$g(x)=g_{extr} (x):=c\frac{p_{k+1} (x)p_k (x_0)-p_{k+1} (x_0)p_k
 (x)}{x-x_0},$$
 for some real constant $c.$
  In case $w(x)=1-x^2,$ we have to take $k-1$ instead $k.$ It gives us a polynomial $g,$ of the form
  $$g(x)=g_{extr} (x):=c\frac{p_{k} (x)p_{k-1} (x_0)-p_{k} (x_0)p_{k-1}
 (x)}{x-x_0},\mbox\ \    k\ge 1.$$


Now, using Christoffel-Darboux's  formula (see \cite{MR0372517})
 $S(P,x_0)$ can be computed explicitly. Indeed,
 \begin{align*}
 \int\limits_{-1}^1w(x)\left(\frac{p_{k+1}(x)p_k(x_0)-p_{k+1}(x_0)p_k(x)}{x-x_0}\right)^2dx &=\frac{\gamma_{k}^2}{\gamma_{k+1}^2}
 \int_{-1}^1w(x)\sum_{i=0}^k(p_i(x_0)p_i(x))^2dx \\&=\frac{\gamma_{k}^2}{\gamma_{k+1}^2}\sum_{i=0}^k p_i(x_0)^2, \end{align*}
 hence

\begin{align*}
S(P,x_0)&=w(x_0)\frac{(g_{extr}(x_0))^2}{\int\limits_{-1}^1w(x)(g_{extr}(x))^2dx}\\&=w(x_0)\frac{(\frac{\gamma_{k+1}}{\gamma_k})^2(\sum\limits_{l=0} ^{k} p_l ^2 (x_0))^2}
{\int\limits_{-1}^1w(x)\left(\frac{p_{k+1}(x)p_k(x_0)-p_{k+1}(x_0)p_k(x)}{x-x_0}\right)^2dx}
\\&=w(x_0)\left(\sum\limits_{l=0}^{k}
p_l^2 (x_0)\right). \end{align*} In case, when $n=2k$ and $w(x)=1-x^2$ we get $$S(P,x_0)=w(x_0)\left(\sum\limits_{l=0}^{k-1}
p_l^2 (x_0)\right).$$

Let $n=2k+1.$ If $w(x)=1+x,$ then $$p_k(x)=J_k^{(1,0)}(x),$$ where $J_k^{(1,0)}(x)$ is the Jacobi polynomial associated with weight $(1+x)(1-x).$ Hence,  $$S(P,x_0)=(1+x_0)\sum\limits_{l=0}^{k} (J^{(0,1)}_l (x_0))^2=S_k(x_0).$$ By analogy, if $w(x)=1-x,$ then

$$S(P,x_0)=(1-x_0)\sum\limits_{l=0}^{k} (J^{(1,0)}_l (x_0))^2=S_k(-x_0).$$

In this way, we get sharp pointwise inequality
\begin{equation}\label{klurbern2}P_{2k+1}(x_0)\le \max{(S_k(x_0),S_k (-x_0))}
\int\limits_{-1}^{1} P_{2k+1}(x)dx \end{equation} for all
$P_{2k+1}\in\mathbb{P}^+_{2k+1}.$

In case $n=2k,$ $w(x)=1$ or $w(x)=1-x^2$ and $$S(P,x_0)=F_k(x_0)$$ or $$S(P,x_0)=H_k(x_0)$$ respectively, where $H_k$ and $F_K.$
We arrive at the following sharp pointwise inequality:
 \begin{equation}\label{klurbern1}
 P_{2k}(x_0)\le \max{(F_k(x_0),H_k (x_0))}
\int\limits_{-1}^{1} P_{2k}(x)dx \end{equation} for all
$P_{2k}\in\mathbb{P}^+_{2k}.$

Let $P_n$ be a polynomial of degree $n,$ that is monotone on $[-1,1],$ i.e., $P_n\in\Delta_n$
Then, $P_n'$ is a nonnegative polynomial on $[-1,1].$ Note,
 that \[\int\limits_{-1}^{1} P_n'(x)dx=P_n(1)-P_n(-1)\le 2\|P_n\|. \] Combining last inequality with~\eqref{klurbern2} and~\eqref{klurbern1} we get
\begin{equation}\label{derivpoint1}
P'_{2k+2}(x)\le 2\cdot \max{(S_k(x),S_k (-x))}||P_{2k+2}|| ,\end{equation}
and \begin{equation}\label{derivpoint2}
P'_{2k+1}(x)\le 2\cdot \max{(H_k(x),F_k
(x))}||P_{2k+1}||,\end{equation} for all monotone polynomials $P_n.$
 This completes the proof.
\end{proof}


Using  Theorem~\ref{oklurman} one can give an alternative proof of
Bernstein's result, that is Theorem~\ref{bernquaz} for polynomials of even degree. The
following fact about orthogonal polynomials is needed.

\begin{theorem}( Szeg{\H{o}} \cite{MR0372517}, 1919)\label{szege}. Let $w(x)$ be a weight function which is non- decreasing (non-
increasing) in the interval $[a,b]$, $b$  and $a$ are finite. If
$\{p_n\}$ is the set of the corresponding orthogonal polynomials, the
functions $w(x)p_n (x)^2 $ attain their maxima in $[a,b]$ at
$x=b$ ($x=a$).
\end{theorem}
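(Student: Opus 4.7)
The approach is to exploit the oscillation structure of $F(x)=w(x)\,p_n(x)^2$ on $[a,b]$ and the interlacing of zeros of $p_n$. First recall that the orthogonal polynomial $p_n$ possesses $n$ simple zeros $x_1<\cdots<x_n$ lying strictly inside $(a,b)$, so $F$ vanishes exactly at these points, is positive elsewhere, and thus achieves a local maximum $M_k$ at some $\eta_k$ in each of the $n+1$ subintervals $[a,x_1],[x_1,x_2],\ldots,[x_n,b]$.

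The rightmost subinterval is handled by a direct argument. By Rolle's theorem the $n-1$ zeros of $p_n'$ lie in the gaps $(x_1,x_2),\ldots,(x_{n-1},x_n)$, so $p_n'$ keeps constant sign on $(x_n,b)$; this sign is positive because $p_n$ passes from negative to positive at $x_n$. Hence $p_n^2$ is strictly increasing on $[x_n,b]$, and combined with the hypothesis that $w$ is non-decreasing this forces $F$ to be non-decreasing on $[x_n,b]$, so $M_n=F(b)$.

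The heart of the proof is the monotone chain $M_0\le M_1\le\cdots\le M_n$. I would establish this via the orthogonality relation $\int_a^b w(x)\,p_n(x)\,Q(x)\,dx=0$ for every $Q\in\mathbb{P}_{n-1}$, choosing $Q$ so as to isolate the contributions from two adjacent subintervals. A natural choice is $Q(x)=p_n(x)/(x-x_{k+1})$, which lies in $\mathbb{P}_{n-1}$ and whose restriction to each subinterval has definite sign; splitting the orthogonality integral according to these signs and invoking the monotonicity of $w$ around $x_{k+1}$ yields a one-sided inequality that, after a standard manipulation, produces $M_{k+1}\ge M_k$. The non-increasing case follows from the reflection $x\mapsto a+b-x$, which interchanges the endpoints and reverses the monotonicity of the weight.

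The main obstacle is precisely this last step: translating the integral inequality coming from orthogonality into the desired pointwise comparison of the extremal values $M_k$ and $M_{k+1}$. The $\eta_k$ are defined only implicitly, so the comparison cannot proceed by direct substitution and requires a careful argument exploiting the Christoffel--Darboux structure of the sequence $\{p_n\}$, as in the original treatment \cite{MR0372517}.
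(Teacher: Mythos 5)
The paper does not prove this statement: it is quoted from Szeg{\H{o}}'s monograph \cite{MR0372517} and used as a black box, so there is no in-paper argument to compare yours against. Judged on its own, your proposal has a genuine gap, and its central intermediate claim is in fact false. You propose to establish the monotone chain $M_0\le M_1\le\cdots\le M_n$ for the relative maxima of $F(x)=w(x)p_n(x)^2$ on the $n+1$ subintervals cut out by the zeros of $p_n$. Take $w\equiv 1$ on $[-1,1]$ (a non-decreasing weight), so that $p_n$ is the Legendre polynomial: the relative maxima of $p_n(x)^2$ equal $1$ at the two endpoints and \emph{decrease} toward the middle of the interval (for $n=4$ they are approximately $1,\,0.184,\,0.141,\,0.184,\,1$ from left to right), so $M_0>M_1$ and the chain fails. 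The failure persists for $w(x)=1+\varepsilon x$ with small $\varepsilon>0$, which is strictly increasing, so this is not an artifact of allowing constant weights. The only true conclusion is the global one, $M_k\le F(b)$ for every $k$; a reduction to comparisons of adjacent maxima cannot work.

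Independently of this, the step you yourself flag as the ``main obstacle'' is precisely the missing proof. The choice $Q(x)=p_n(x)/(x-x_{k+1})$ yields the single scalar identity $\int_a^b w(x)p_n(x)^2(x-x_{k+1})^{-1}\,dx=0$, which mixes contributions from all $n+1$ subintervals at once; it cannot be localized to a comparison of the two maxima adjacent to $x_{k+1}$ without substantial further input, and you do not supply that input (you defer to ``the original treatment''). What you do prove correctly --- the zeros of $p_n$ are simple and interior, $p_n'$ has no zero to the right of $x_n$, hence $F$ is non-decreasing on $[x_n,b]$ and $M_n=F(b)$ --- settles the theorem only on the last subinterval. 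As written, the proposal is therefore a proof on $[x_n,b]$ together with an unproven, and as stated false, reduction for the remainder of $[a,b]$; the actual argument of \cite{MR0372517} must compare each relative maximum with the value at $b$ directly rather than through a chain of neighbours.
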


{\it Proof of~Theorem~\ref{bernquaz}.}
We consider an even case $n=2k+2,$ $k\ge 0.$ Using Szeg{\H{o}}'s theorem for
non-decreasing weight $w(x)=1+x$ and for non-increasing weight
$w(x)=1-x$ together with the fact that $$J^{(0,1)}_l(1)=J^{(1,0)}_l(-1)=\frac{\sqrt{l+1}}{\sqrt{2}}$$ we get:
$$(1+x)(J^{(0,1)}_l (x))^2\le 2(J^{(0,1)}_l (1))^2=l+1,$$
$$(1+x)(J^{(1,0)}_l (x))^2\le 2(J^{(1,0)}_l (-1))^2=l+1$$
for all $l\ge 0$ and $x\in[-1,1]$. Summing these inequalities for $0\le l\le k$ and
using Theorem~\ref{oklurman} we get

$$P'_n(x)\le 2\sum\limits_{l=0}^k(1+l) \|P_n\|=\frac{n(n+2)}{4}\|P_n\|,$$
 that proves Bernstein-Markov's inequality for monotone polynomials
of even degree.

Multiplying both sides of \eqref{derivpoint1} and \eqref{derivpoint2} by $\sqrt{1-x^2}$ and taking supremum over all $x\in[-1,1]$ we get the following
\begin{corollary}{ (Sharp Benstein-type inequality for monotone polynomials).}
\begin{equation}\label{klurbern}
 \sup_{P_n\in\triangle_
n}\frac{\|P'_n (x)\sqrt{1-x^2}\|}{\|P_n\|}=\left\{
\begin{array}{ll}
 2\|\sqrt{1-x^2}S_k(x)\| , & \mbox{\rm } n=2k+2 ,\\
 2\max({\|\sqrt{1-x^2}H_k(x)\|,\|\sqrt{1-x^2}F_k(x)\|}), & \mbox{\rm  }  n=2k+1.
\end{array}
\right.
\end{equation}
\end{corollary}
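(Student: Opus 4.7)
The plan is to derive the corollary in two steps. The upper bound follows immediately from Theorem~\ref{oklurman}: multiplying the pointwise estimates \eqref{derivpoint1} and \eqref{derivpoint2} by $\sqrt{1-x^2}$ and taking the supremum over $x\in[-1,1]$ bounds $\|P_n'(x)\sqrt{1-x^2}\|/\|P_n\|$. Sharpness is then obtained by lifting the pointwise extremal nonnegative polynomial constructed inside the proof of Theorem~\ref{oklurman} to a monotone polynomial via integration and centering.

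For the upper bound in the even case $n=2k+2$, estimate \eqref{derivpoint1} yields
$$\|P'_n(x)\sqrt{1-x^2}\|\le 2\bigl\|\sqrt{1-x^2}\max(S_k(x),S_k(-x))\bigr\|\cdot\|P_n\|.$$
Since $\sqrt{1-x^2}$ is even, the substitution $x\mapsto -x$ gives $\|\sqrt{1-x^2}S_k(-x)\|=\|\sqrt{1-x^2}S_k(x)\|$, and the elementary identity $\|\max(f,g)\|_\infty=\max(\|f\|_\infty,\|g\|_\infty)$ for nonnegative continuous $f,g$ collapses the right-hand side to $2\|\sqrt{1-x^2}S_k(x)\|\cdot\|P_n\|$. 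The odd case $n=2k+1$ is analogous via \eqref{derivpoint2}, but since no symmetry relates $H_k$ and $F_k$, both norms persist in the final expression.

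For sharpness I pick $x^*\in(-1,1)$ attaining the maximum of $\sqrt{1-x^2}\max(S_k(x),S_k(-x))$ (respectively of $\sqrt{1-x^2}\max(H_k(x),F_k(x))$), and let $P^*$ be the explicit extremal nonnegative polynomial of degree $n-1$ produced in the proof of Theorem~\ref{oklurman} at $x_0=x^*$, so that
$$\frac{P^*(x^*)}{\int_{-1}^1 P^*(t)\,dt}=\max(S_k(x^*),S_k(-x^*))$$
(or the analogous identity with $H_k,F_k$). Setting
$$P_n(x):=\int_{-1}^x P^*(t)\,dt-\tfrac{1}{2}\int_{-1}^1 P^*(t)\,dt,$$
one has $P_n'=P^*\ge 0$, so $P_n\in\triangle_n$ has degree exactly $n$, and the centering forces $P_n(1)=-P_n(-1)=\tfrac{1}{2}\int_{-1}^1 P^*(t)\,dt$, hence $\|P_n\|=\tfrac{1}{2}\int_{-1}^1 P^*(t)\,dt$. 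Evaluating at $x^*$,
$$\frac{|P_n'(x^*)|\sqrt{1-(x^*)^2}}{\|P_n\|}=2\sqrt{1-(x^*)^2}\max(S_k(x^*),S_k(-x^*))=2\|\sqrt{1-x^2}S_k(x)\|,$$
which is a lower bound for $\|P'_n(x)\sqrt{1-x^2}\|/\|P_n\|$, so equality with the upper bound is attained.

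The principal point to verify is that the pointwise extremal $P^*$ furnished by Theorem~\ref{oklurman} genuinely achieves equality (not merely supremum) and has the correct degree $n-1$ to make $P_n$ belong to $\triangle_n$; both properties are visible from the explicit form $P^*(x)=w(x)g_{extr}(x)^2$ constructed via Christoffel--Darboux in the proof. The degenerate endpoints $x^*=\pm 1$ need no separate treatment, since $\sqrt{1-x^2}$ vanishes there and the relevant suprema of the nonnegative continuous functions $\sqrt{1-x^2}\max(S_k(\pm x))$, $\sqrt{1-x^2}\max(H_k,F_k)$ are attained strictly inside $(-1,1)$.
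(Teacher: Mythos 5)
Your proof is correct. The upper bound is obtained exactly as in the paper: multiply the pointwise estimates \eqref{derivpoint1} and \eqref{derivpoint2} by $\sqrt{1-x^2}$ and take the supremum; your added observations that $\|\sqrt{1-x^2}\,S_k(-x)\|=\|\sqrt{1-x^2}\,S_k(x)\|$ and that $\|\max(f,g)\|=\max(\|f\|,\|g\|)$ for nonnegative continuous $f,g$ merely make explicit why the even case collapses to a single norm. Where you genuinely diverge is in the sharpness argument, which the paper does not actually carry out: it only appends a remark asserting that equality holds for the centered antiderivatives $s_k,h_k,f_k$ of the diagonal kernels $S_k,H_k,F_k$. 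Your construction --- fix $x^*$ maximizing $\sqrt{1-x^2}\max(S_k(x),S_k(-x))$, take the pointwise extremal $P^*=w\,g_{extr}^2$ of Theorem~\ref{oklurman} at $x_0=x^*$, integrate and center --- is the argument that actually works, since it reduces sharpness of the supremum inequality to the already established sharpness of the pointwise one via an interchange of suprema. It is in fact more reliable than the paper's remark: since the $p_l$ are orthonormal with respect to $w$, one has $\int_{-1}^{1}S_k(t)\,dt=k+1$, so the polynomial $s_k$ named in the remark yields the ratio $\tfrac{2}{k+1}\|\sqrt{1-x^2}\,S_k(x)\|$ rather than $2\|\sqrt{1-x^2}\,S_k(x)\|$ once $k\ge 1$; the correct extremal is, as you have it, the centered integral of $w(t)\bigl(\sum_{l}p_l(t)p_l(x^*)\bigr)^2$ (the squared kernel at the maximizing point), not of the diagonal $w(t)\sum_l p_l(t)^2$. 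Your remaining checks --- that $P^*$ has degree $n-1$ so its antiderivative lies in $\triangle_n$, that centering gives $\|P_n\|=\tfrac12\int_{-1}^{1}P^*(t)\,dt$, and that the maximum of the continuous nonnegative weight function is attained in the open interval because $\sqrt{1-x^2}$ vanishes at $\pm1$ --- are all in order.
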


Using estimates for Jacobi polynomials one can observe that the right hand side is of the order $\frac{2}{\pi}n.$ This implies that Bernstein's inequality for monotone polynomials is not essentially better than the 
classical one. 
 \begin{remark}
From the proof of~Theorem\eqref{oklurman} it follows that equality in~\eqref{klurbern} holds for one of the following polynomials
\begin{align*}
&s_k (x):=\int_{-1}^x(1+t)\sum\limits_{l=0}^{k} (J^{(0,1)}_l (t))^2;\\&
h_k (x):=\int_{-1}^x(1-t^2)\sum\limits_{l=0}^{k-1} (J_l ^{(1,1)} (t))^2;\\&
f_k(x):=\int_{-1}^x\sum\limits_{l=0}^{k} (J_l ^{(0,0)} (t)^2,\\
\end{align*}
that are normalized, such that
\begin{align*}
&s_k(-1)=-s_k(1),\\& h_k(-1)=-h_k(1),\\& f_k(-1)=-f_k(1).\\
\end{align*}
\end{remark}
\end{section}
\bibliographystyle{plain}

\bibliography{mybliography}

\end{document}